\documentclass[11pt]{article}
\date{}

\usepackage[title]{appendix}
\usepackage{xcolor}
\usepackage[margin=1in]{geometry}                
\usepackage{graphicx}
\usepackage{subcaption}
\usepackage{pdflscape}
\usepackage{amssymb}
\usepackage[normalem]{ulem}
\usepackage{hyperref}
\usepackage{enumitem}
\usepackage{mathrsfs}
\usepackage{epstopdf}
\usepackage{rotating}
\usepackage{longtable} 
\usepackage{adjustbox}
\usepackage{float}

\usepackage{color}
\usepackage{bbm, dsfont}
\usepackage{pst-node}
\usepackage{tikz-cd}
\usepackage{amsfonts} 
\usepackage{geometry}
\usepackage{amsthm}
\usepackage{amsmath}
\usepackage{titlesec}
\usepackage{amssymb}
\usepackage{enumitem}
\usepackage{float}
\usepackage [english]{babel}
\usepackage [autostyle, english = american]{csquotes}
\usepackage{algorithm}
\usepackage[noend]{algpseudocode} 
\makeatletter
\def\BState{\State\hskip-\ALG@thistlm}
\makeatother
\usepackage{hyperref}
\usepackage[normalem]{ulem}
\usepackage{mathrsfs}
\usepackage[italicdiff]{physics}

\newlist{casess}{enumerate}{1}
\setlist[casess]{label=     \textbf{Case} \arabic*:}
\usepackage{mathtools}

\makeatletter
\newcommand*{\rom}[1]{\expandafter\@slowromancap\romannumeral #1@}
\makeatother

\usepackage{etoolbox}

\makeatletter
\patchcmd{\ttlh@hang}{\parindent\z@}{\parindent\z@\leavevmode}{}{}
\patchcmd{\ttlh@hang}{\noindent}{}{}{}
\makeatother

\usepackage{listings}
\usepackage{color} 
\definecolor{mygreen}{RGB}{28,172,0} 
\definecolor{mylilas}{RGB}{170,55,241}

\newlist{Assumptions}{enumerate}{1}
\setlist[Assumptions]{label=     \textbf{Assumption} \arabic*:}

\makeatletter

\newsavebox{\@brx}
\newcommand{\llangle}[1][]{\savebox{\@brx}{\(\m@th{#1\langle}\)}%
  \mathopen{\copy\@brx\kern-0.5\wd\@brx\usebox{\@brx}}}
\newcommand{\rrangle}[1][]{\savebox{\@brx}{\(\m@th{#1\rangle}\)}%
  \mathclose{\copy\@brx\kern-0.5\wd\@brx\usebox{\@brx}}}
\makeatother

\usepackage{lipsum} 
\usepackage{titlesec}
\titleformat{\subsection}[runin]
       {\normalfont\bfseries}
       {\thesubsection}
       {0.5em}
       {}
       [.]

 \newtheorem{thm}{Theorem}[section]
 \newtheorem{cor}[thm]{Corollary}
 
 \newtheorem{lem}[thm]{Lemma}
 \newtheorem{prop}[thm]{Proposition}
 \theoremstyle{definition}
 \newtheorem{defn}[thm]{Definition}
 \theoremstyle{remark}
 \newtheorem{rem}[thm]{Remark}

 \numberwithin{equation}{section}

\numberwithin{equation}{section}

\DeclarePairedDelimiterX{\inp}[2]{\langle}{\rangle}{#1, #2}

\makeatletter
\newcommand*\bigcdot{\mathpalette\bigcdot@{.5}}
\newcommand*\bigcdot@[2]{\mathbin{\vcenter{\hbox{\scalebox{#2}{$\m@th#1\bullet$}}}}}
\makeatother

\def\<{\langle}
\def\>{\rangle}

\newcommand{\Cs}{\ensuremath{\mathrm{C}^\ast}}

\numberwithin{equation}{section}

\usepackage[backend=biber,maxnames=10]{biblatex}
\begin{document}

\title{{\rm PHP} decompositions and primitivity of group rings for linear groups}

\author{Felipe I. Flores
\footnote{
\textbf{2020 Mathematics Subject Classification:} Primary 16S34, Secondary 20C07, 20F65, 20G15.
\newline
\textbf{Key Words:} Primitive, group ring, property $\rm PHP$, linear group, trivial amenable radical.}
}

\maketitle

\begin{abstract}\setlength{\parindent}{0pt}\setlength{\parskip}{1ex}\noindent
We show that every countable group $G$ that has Ozawa's property $\rm PHP$ and contains a non-abelian free subgroup has the following property: the group ring $KG$ is primitive for any field $K$. As a consequence, we deduce that all non-trivial countable linear groups with trivial amenable radical have this property. Along the way, we prove that the class of groups with $\rm PHP$ enjoys some permanence properties that are of independent interest.
\end{abstract}

\section{Introduction}

A ring $R$ is said to be (right) primitive if it has a faithful irreducible (right) $R$-module. For many years, it was not known whether there exists a group $G$ and a ring $R$ such that the group ring $RG$ is primitive \cite[Problem 17]{Ka70}. In \cite{FoSn72}, Formanek and Snider provided the first example of such a group, and they even showed that these groups exist in abundance: any group can be embedded in another group $G$ such that $KG$ is primitive for some field $K$.

Since then, the study of primitivity in group rings has been an active area of research for several decades. A major milestone was achieved in 1978 through the work of Domanov \cite{Do78}, Farkas–Passman \cite{FaPa78}, and Roseblade \cite{Ro78,Ro79}, who established a complete characterization of the primitivity of group rings of polycyclic-by-finite groups. More concretely, they proved that, for a polycyclic-by-finite group $G$, the group ring $KG$ is primitive if and only if the FC-center of $G$ is trivial and the field $K$ is not absolute.

Subsequent work addressed non-noetherian groups, producing results on free products \cite{Fo73}, amalgamated free products and HNN extensions \cite{Bo89,Ni07,AlNi17}, locally free groups \cite{Ni11}, torsion-free (non-elementary) hyperbolic groups \cite{So18}, and acylindrically hyperbolic groups without finite normal subgroups \cite{AbDa19}. 

Very recently, and somewhat inspired by the study of \Cs-algebra theory, the author of the present article showed that the group ring $KG$ is primitive whenever $G$ is mixed-identity-free and contains a free subgroup of rank $|G|$, and $K$ is any field \cite{Fl26}. That theorem recovered the results of \cite{Fo73,Bo89,So18,AbDa19} and provided new, interesting examples of groups with primitive group rings, such as Thompson-like groups, commensurator groups of hyperbolic groups, and some Kac-Moody groups. The approach in \cite{Fl26} fundamentally relies on a criterion established by Alexander-Nishinaka \cite{AlNi17}. Our approach here is also based on their work, but in a different manner. The precise difference will be explained later in the introduction.

The other ingredient that goes into this article is Ozawa's property $\rm PHP$. We will postpone stating the precise definition, as it might be somewhat technical (see Definition \ref{thedef}). For the moment, let us intuitively mention that it is a combinatorial property based on north-south dynamics and the classical Powers-type paradoxical decomposition, but it is flexible enough to be enjoyed by groups that do not admit actions with north-south dynamics in a traditional sense. The property $\rm PHP$ has been used to prove that many interesting classes of group \Cs-algebras are `selfless' \cite{Oz25,Vi26,FKOCP26,BaFl26,Ry26}, thus obtaining very desirable regularity properties; see \cite{Ro25}.

The main result of this article is the following:

\begin{thm}\label{mainthm}
   Let $G$ be a group with Ozawa's property ${\rm PHP}$ that contains a non-abelian free subgroup whose cardinality is the same as that of $G$. Then, if $R$ is a domain with $|R|\leq |G|$, the group ring $RG$ of $G$ over $R$ is primitive. Moreover, the group ring $KG$ is primitive for any field $K$.
\end{thm}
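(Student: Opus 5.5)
The plan is to verify the Alexander--Nishinaka criterion \cite{AlNi17} for $G$; the hypotheses of Theorem \ref{mainthm} are exactly what that criterion consumes. In the form used in \cite{Fl26}, the criterion says roughly the following. Suppose $G$ contains a free subgroup $F$ of rank $|G|$. Suppose further that for every finite subset $M\subseteq G\setminus\{1\}$ there are elements of $F$, or a free subsemigroup of $F$, such that conjugates of elements of $M$ by suitable words never cancel into the identity or into prescribed finite sets. Then $RG$ is primitive for every domain $R$ with $|R|\le |G|$. I would treat that criterion as a black box. In \cite{Fl26} its combinatorial hypothesis was checked through mixed identities; here it must come from property $\rm PHP$.

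The first step is to unpack Definition \ref{thedef}. For a finite set $M\subseteq G\setminus\{1\}$, PHP supplies a $G$-set (or a family of them) with a partition that behaves like north--south dynamics for each $g\in M$: a point $x$ and subsets $U_g$ with $g U_g^c\subseteq U_g$, together with a Powers-type paradoxical decomposition. The second step is to combine this with the free subgroup $F\cong\mathbb{F}_{|G|}$ through a ping-pong argument. The goal is to produce elements $a_1,a_2,\dots\in F$, with as many as needed, such that for any reduced word $w$ in the $a_i$ and any $g\in M$, the element $w g w^{-1}$ moves a designated point out of a designated region. In particular $w g w^{-1}$ is nontrivial, lies outside a prescribed finite set, and distinct words give "independent" conjugates.

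This is precisely the non-degeneracy condition (the "$(\ast)$-condition") in \cite{AlNi17}. It lets one construct, by transfinite recursion of length $|G|$, a right ideal that is comaximal with every nonzero two-sided ideal. The cardinality assumptions $|F|=|G|\ge |R|$ enter at exactly this point: we enumerate the nonzero elements of $RG$ and use fresh free generators at each stage. The field statement then reduces to the domain case. If $|K|\le|G|$ it is covered directly. Otherwise, $|K|>|G|$, so $K$ is not absolute, and we pass to $G$-free-products or use the standard trick (as in \cite{Fl26}) that primitivity of $kG$ for a suitable subfield transfers to the extension. Alternatively, we argue directly that for large $K$ the criterion still applies, since it only needs $|G|$ to bound the number of ideals to be avoided rather than the number of ring elements.

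The main obstacle I expect is the ping-pong step. PHP only gives paradoxical behaviour relative to a \emph{finite} set and does not obviously interact with an arbitrary given free subgroup $F$. We need the dynamics witnessing PHP to be realised by elements that are still free and numerous enough, while the elements of $M$ act as "hyperbolic" relative to them. My first attempt is to pick the witnessing elements inside $F$: take conjugates of a single PHP witness by generators of $F$, and use the freeness of $F$ together with the north--south partition to run a standard ping-pong argument on the PHP $G$-set. If that fails, I would use the permanence properties of PHP to pass to a subgroup where both structures coexist.
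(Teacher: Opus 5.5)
\textbf{There is a genuine gap: no argument can derive the Alexander--Nishinaka condition $(*)$ from ${\rm PHP}$.} Your plan rests on deriving $(*)$, or the mixed-identity-freeness used in \cite{Fl26}, from ${\rm PHP}$ plus the free subgroup, and then quoting the criterion. That step is false in the generality of the theorem. Take $G=\mathbb F_2\times\mathbb F_2$. It is linear with trivial amenable radical, so it has ${\rm PHP}$ (Vigdorovich; alternatively, ${\rm PHP}$ passes to direct products). It also contains a free subgroup of cardinality $|G|$. Pick $1\neq g\in\mathbb F_2\times\{1\}$ and $1\neq h\in\{1\}\times\mathbb F_2$, and put $M=\{g^{\pm1},h^{\pm1}\}$. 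For any distinct $a,b$,
\[
(a^{-1}ga)(b^{-1}hb)(a^{-1}g^{-1}a)(b^{-1}h^{-1}b)=1,
\]
because conjugates of $g$ and of $h$ lie in the two commuting normal factors. The pattern $a,b,a,b$ has no equal consecutive entries, so $(*)$ fails with $m=4$ for every candidate triple. The ping-pong step you flag as the main obstacle is therefore not merely hard but impossible. This also reflects a misreading of Definition \ref{thedef}. ${\rm PHP}$ does not supply a $G$-set with north--south dynamics for each $g\in M$. It supplies elements $t_i$ and sets $C_i\subseteq D_i\subseteq G$ with disjoint left translates and a multiplicity bound. That is a Powers-type averaging statement, with no per-element contraction to ping-pong with.

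\textbf{The missing idea is to move the non-degeneracy from $G$ to the ring $RG$.} For $0\neq\alpha\in RG$ with support $F$, take ${\rm PHP}$ witnesses for $(F,1/8)$ and set $\theta_\alpha=\alpha\sum_{i=1}^n(t_i+t_i^{-1})\in\alpha RG$. If $i\notin J(h)$, then $at_ih\in aC_i$ and $at_i^{-1}h\in at_i^{-1}(G\setminus D_i)$, so these ``good'' terms of $\theta_\alpha\beta$ are pairwise distinct. The multiplicity bound leaves at most $2|F|\varepsilon\sqrt n\,|{\rm supp}(\beta)|$ bad terms, each of which can cancel at most one good term. This gives $|{\rm supp}(\theta_\alpha\beta)|>|{\rm supp}(\beta)|$ for every $\beta\neq0$.

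The free subgroup is then used only through the isolated-product estimate \cite[Lemma 3.5]{AlNi17}. Apply it to $e_\alpha=(x_{\alpha,1}+x_{\alpha,2}+x_{\alpha,3})\theta_\alpha$, with basis elements simply indexed by the nonzero $\alpha\in RG$, so no transfinite recursion is needed. This shows that $\rho=\sum_\alpha(1+e_\alpha)RG$ is proper. Since $e_\alpha\in RG\,\alpha\,RG$, $\rho$ is comaximal with every nonzero ideal, and Formanek's criterion finishes.

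\textbf{Your field case also has a gap.} Passing from the prime subfield $k$ (countable, hence covered by the domain case) to $K$ is Passman's theorem. For a transcendental $K/k$ it needs ${\rm FC}(G)=\{1\}$, which you never establish. It follows from ${\rm PHP}$: $\mathrm{C}^*_r(G)$ is then selfless, hence simple, so $G$ has trivial amenable radical. Your alternative of running the construction directly when $|K|>|G|$ fails, because it needs a fresh free generator for each nonzero element of $KG$.
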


The above theorem applies to many classes of groups. Indeed, the class of groups having property ${\rm PHP}$ includes all groups that admit topologically free extremely proximal actions \cite{Oz25}. Furthermore, having such an action also implies the existence of free subgroups. Hence, the theorem above recovers Theorem 1.4 in \cite{Fl26}, so it applies to all the relevant classes of groups studied in \cite{BrIvOm20,Vi26,FKOCP26,BaFl26,Ry26}. Furthermore, it is not currently known if there are mixed-identity-free groups that contain non-abelian free groups and do not have property $\rm PHP$, although we expect them to exist. In any case, the theorem above also applies to non-MIF groups, as the property ${\rm PHP}$ is stable under direct products, and the direct product of non-trivial groups is never mixed-identity-free. Even more, Vigdorovich proved in \cite[Theorem 1.2]{Vi26} that all linear groups with trivial amenable radical have Ozawa's ${\rm PHP}$. Hence, we can now add an interesting new class of groups to our collection of groups with primitive group rings. This is recorded in the following corollary.

\begin{cor}\label{maincor}
Let $G$ be a non-trivial countable linear group with trivial amenable radical. Then, if $R$ is a countable domain, the group ring $RG$ is primitive. Moreover, the group ring $KG$ is primitive for any field $K$.
\end{cor}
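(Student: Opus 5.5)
The plan is to reduce Corollary \ref{maincor} to Theorem \ref{mainthm}. Concretely, I will check its three hypotheses for a non-trivial countable linear group $G$ with trivial amenable radical: property ${\rm PHP}$, a non-abelian free subgroup of cardinality $|G|$, and the cardinality bound on $R$.

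\textbf{Step 1: property ${\rm PHP}$.} By Vigdorovich \cite[Theorem 1.2]{Vi26}, every linear group with trivial amenable radical has Ozawa's property ${\rm PHP}$. This applies to $G$ directly.

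\textbf{Step 2: a free subgroup of the right cardinality.} This is the main step. I will use the Tits alternative: a finitely generated linear group either contains a non-abelian free subgroup or is virtually solvable. Suppose $G$ has no non-abelian free subgroup. Then every finitely generated subgroup of $G$ is virtually solvable, hence amenable, so $G$ is locally amenable and therefore amenable. Its amenable radical would then be $G$ itself, which is non-trivial, contradicting the hypothesis. So $G$ contains a copy of $F_2$, and hence free subgroups of every countable rank.

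For the cardinality condition: $G$ is infinite, since a finite group is amenable and would be its own non-trivial amenable radical. So $|G|=\aleph_0=|F_2|$, as the theorem requires. Only the finitely generated Tits alternative is used here, applied to finitely generated subgroups of $G$. This avoids any characteristic-zero restriction in the version for arbitrary linear groups.

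\textbf{Step 3: conclusion.} If $R$ is a countable domain, then $|R|\le\aleph_0=|G|$, so Theorem \ref{mainthm} gives that $RG$ is primitive. The statement that $KG$ is primitive for every field $K$ is exactly the \enquote{moreover} clause of Theorem \ref{mainthm}, which has no cardinality restriction on $K$. The only non-formal input is Step 2, namely combining the Tits alternative with the fact that an increasing union of amenable groups is amenable.
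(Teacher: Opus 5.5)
Your Steps 2 and 3 match the paper's argument: the Tits alternative applied only to finitely generated subgroups, amenability of directed unions of amenable groups, then Theorem \ref{mainthm}. Your explicit check that $|G|=\aleph_0=|\mathbb F_2|$ is a harmless addition that the paper leaves implicit.

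The divergence, and the one soft spot, is Step 1. You apply \cite[Theorem 1.2]{Vi26} to $G$ as a black box, and the paper's proof deliberately does not. Vigdorovich's way of producing ${\rm PHP}$ is a faithful action on a flag space, and this is run for finitely generated linear groups with trivial amenable radical. The paper remarks, just before Lemma \ref{directed}, that passing from finitely generated subgroups to the whole group was avoided in \cite{Vi26}. So the introduction paraphrases \cite[Theorem 1.2]{Vi26} as covering all linear groups. Yet the proof of the corollary treats ${\rm PHP}$ for a non-finitely generated $G$ as something still to be established. Your Step 1 skips exactly that point.

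The paper fills it in three moves:
(i) by \cite[Lemma 5.3]{Vi26}, $G$ is a directed union of finitely generated subgroups that themselves have trivial amenable radical;
(ii) each of these has ${\rm PHP}$ by Vigdorovich's flag-space argument;
(iii) Lemma \ref{directed} shows that ${\rm PHP}$ passes to directed unions.

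Move (i) is not a formality. You cannot simply say ``every finitely generated subgroup of $G$ has ${\rm PHP}$''. A finitely generated subgroup, such as a cyclic one, can have non-trivial amenable radical, and then it lies outside the scope of Vigdorovich's theorem. Putting this argument into Step 1 makes your proof complete. Note also that this step, not the Tits-alternative step you call the main one, is the genuinely non-formal input to the corollary. It is exactly why the paper proves Lemma \ref{directed}.
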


Recall that a group is amenable if its left-translation action admits an invariant mean. In the case of linear groups, and thanks to the Tits alternative \cite{Ti72}, nonamenability is particularly easy to state: for finitely generated linear groups, it is equivalent to having a non-abelian free subgroup. Hence, a linear group has a trivial amenable radical if and only if every normal subgroup contains a non-abelian free subgroup.

Let us now comment on the proofs. As we mentioned before, this article follows the methods of Alexander-Nishinaka \cite{AlNi17} closely. To be more precise, we also use support-expansion techniques combined with Formanek's criterion \cite[Theorem 1]{Fo73} to conclude primitivity, resembling the ideas behind the proof of \cite[Theorem 1.1]{AlNi17}. Furthermore, Lemma \ref{AN} is a direct consequence of \cite[Lemma 3.5]{AlNi17}, so that part is almost the same. The main difference is that our main support-expansion lemma is based on property $\rm PHP$ instead of Alexander-Nishinaka's property $(*)$, and we use it in a way that resembles a MIF-like property for the ring $RG$ instead of for the group $G$, in the same way that selflessness replaces mixed-identity-freeness in the \Cs-algebraic context. See Lemma \ref{exp} and its proof. 

Along the way, we also prove some permanence properties for the class of groups with property $\rm PHP$. These are of independent interest, and their relevance will be explained in the next section.

\section{Property {\rm PHP}}

The purpose of this section is to introduce Ozawa's property ${\rm {PHP}}$ \cite{Oz25}. We will also prove two permanence properties that are of independent interest.

\begin{defn}[Ozawa]\label{thedef}
We say that a group $G$ has the property ${\rm {PHP}}$ if, for every finite $F\subseteq G$ and every $\varepsilon>0$, there exist $N\geq 1$ such that for all $n\geq N$, there are elements $t_1,\ldots,t_n\in G$ and subsets $C_i\subseteq D_i\subseteq G$ such that:
\begin{enumerate}
\item the members of
$$
\{aC_i:a\in F,\ 1\leq i\leq n\}
 \ \cup\
 \{at_i^{-1}( G\setminus D_i):a\in F,\ 1\leq i\leq n\}
$$
are pairwise disjoint;
\item
for every $x\in G$, the set
$$
 J(x):=\{i:x\in D_i\}
   \cup\{i:x\in t_i^{-1}( G\setminus C_i)\}
$$
has cardinality at most $\varepsilon\sqrt n$.
\end{enumerate}
\end{defn}

We first prove that the property ${\rm {PHP}}$ is stable under directed unions. In the future, we will use it to conclude that a group has property ${\rm {PHP}}$ whenever all of its finitely generated subgroups do. This was avoided in \cite[Proof of Theorem 1.4]{Vi26}.

\begin{lem}\label{directed}
Suppose $ G$ is the directed union of subgroups $ G_j$, each of which has property ${\rm {PHP}}$.  Then $ G$ has property ${\rm {PHP}}$.
\end{lem}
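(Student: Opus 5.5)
The plan is to reduce to a single subgroup $G_j$ and then spread the ${\rm PHP}$ data of $G_j$ over all of $G$ using right cosets. Since $G$ is a directed union of the $G_j$, fix a finite set $F\subseteq G$ and $\varepsilon>0$. Because $F$ is finite and the family is directed, there is an index $j$ with $F\subseteq G_j$. Apply property ${\rm PHP}$ of $G_j$ to $F$ and $\varepsilon$. This gives $N\geq 1$ such that for each $n\geq N$ there are $t_1,\dots,t_n\in G_j$ and $C_i\subseteq D_i\subseteq G_j$ satisfying conditions (1) and (2) of Definition \ref{thedef} inside $G_j$. The same $N$ will serve for $G$.

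Choose a set $S$ of representatives for the right cosets of $G_j$, so that $G=\bigsqcup_{s\in S}G_js$. Define
$$C_i':=\bigsqcup_{s\in S}C_is,\qquad D_i':=\bigsqcup_{s\in S}D_is.$$
Clearly $C_i'\subseteq D_i'$. Because the cosets partition $G$, we get $G\setminus D_i'=\bigsqcup_s (G_j\setminus D_i)s$. Since $t_i\in G_j$, left multiplication by $t_i^{-1}$ preserves each coset, so
$$t_i^{-1}(G\setminus D_i')=\bigsqcup_s t_i^{-1}(G_j\setminus D_i)s.$$
Likewise, for $a\in F\subseteq G_j$,
$$aC_i'=\bigsqcup_s (aC_i)s,\qquad at_i^{-1}(G\setminus D_i')=\bigsqcup_s \bigl(at_i^{-1}(G_j\setminus D_i)\bigr)s.$$
So each of the sets in condition (1) for $G$ meets each coset $G_js$ exactly in the right translate by $s$ of the corresponding set for $G_j$. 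Right translation by $s$ is a bijection $G_j\to G_js$. Hence pairwise disjointness inside $G_j$ gives pairwise disjointness inside every coset, and therefore in $G$.

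For condition (2), write any $x\in G$ uniquely as $x=ys$ with $y\in G_j$ and $s\in S$. By the coset-wise description above, $x\in D_i'$ if and only if $y\in D_i$. Similarly, $x\in t_i^{-1}(G\setminus C_i')$ if and only if $y\in t_i^{-1}(G_j\setminus C_i)$. Thus the set $J(x)$ computed in $G$ coincides with the set $J(y)$ computed in $G_j$, and so has cardinality at most $\varepsilon\sqrt n$.

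The only delicate point, which I expect to be the main (mild) obstacle, is the choice of side for the cosets. All the sets in Definition \ref{thedef} are built by \emph{left} multiplication by elements of $F\cup\{t_i^{-1}\}\subseteq G_j$. One must therefore decompose $G$ into \emph{right} cosets $G_js$, which are invariant under left multiplication by $G_j$, so that the construction commutes with all the operations involved. With that choice, the verification above is routine and the lemma follows.
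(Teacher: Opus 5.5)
Your proposal is correct and follows the paper's proof: both pick $j$ with $F\subseteq G_j$, spread the witnesses over a right transversal by setting $C_i'=C_iS$ and $D_i'=D_iS$, and check disjointness and the multiplicity bound coset by coset, keeping the same $N$. Your remark that right cosets are needed because all operations in the definition are left multiplications by elements of $G_j$ is exactly the point the paper relies on.
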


\begin{proof}
Fix a finite subset $F\subseteq G$ and a tolerance $\varepsilon>0$, and choose $j$ with $F\subseteq G_j=:H$. Let $T$ be a right transversal for $H$ in $ G$, so that every element of $ G$ has a unique decomposition $ht$, with $h\in H$ and $t\in T$.

For witnesses $t_i\in H$ and $C_i\subseteq D_i\subseteq H$ to ${\rm {PHP}}$ inside $H$, define
$$
 C_i'=C_iT,\qquad D_i'=D_iT.
$$
Then
$$
  G\setminus D_i'=(H\setminus D_i)T,
$$
because left translations in the PHP conditions preserve each right coset $Ht$. Thus, pairwise disjointness inside $H$ gives pairwise disjointness in every right coset; hence in $ G$.

If $x=ht$ is the unique right-coset expression, then
$$
 x\in D_i'\Longleftrightarrow h\in D_i, \quad x\in t_i^{-1}( G\setminus C_i')\Longleftrightarrow h\in t_i^{-1}(H\setminus C_i).
$$
The multiplicity bound is, therefore, unchanged. The same $N$ works for $F$ in $ G$, proving property ${\rm {PHP}}$.
\end{proof}

We now show that property ${\rm {PHP}}$ is stable under passage to finite index subgroups. This was proven in \cite{FKOCP26} for groups admitting topologically free extreme boundary actions, and it is relevant for \cite{Be26}, where Bell works with groups whose finite-index subgroups give selfless \Cs-algebras. Furthermore, in \cite[p. 18]{Vi26}, Vigdorovich asks if selflessness passes to the \Cs-algebras of finite-index subgroups. The next result, combined with \cite{Oz25}, provides a positive answer in the presence of property ${\rm {PHP}}$.

\begin{prop}
    Suppose that $G$ has property ${\rm {PHP}}$ and that $H\leq G$ has finite index. Then $H$ has property ${\rm {PHP}}$.
\end{prop}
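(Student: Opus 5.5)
The plan is to take witnesses for ${\rm PHP}$ in $G$, keep only those whose elements $t_i$ lie in a single coset of $H$, and then move that coset onto $H$ by a translation. Throughout, let $k=[G:H]$ and fix a transversal $g_1,\dots,g_k$ of the left cosets $g_jH$. Given a finite $F\subseteq H$ and $\varepsilon>0$, I would apply ${\rm PHP}$ in $G$ to a finite set $F'\supseteq F$ and a tolerance $\varepsilon'$, with both to be fixed later. A natural first choice is $F'=\bigcup_j g_jFg_j^{-1}\cup F$, which is finite, and $\varepsilon'=\varepsilon/\sqrt{k}$. Let $N'$ be the threshold this produces.

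\textbf{Step 1 (pigeonhole).} For $n$ large, I work with $m=kn\geq N'$ witnesses $t_1,\dots,t_m\in G$ and sets $C_i\subseteq D_i\subseteq G$. Some coset $g H$ with $g=g_j$ contains at least $n$ of the $t_i$. I keep exactly $n$ of these, relabel them, and write $t_i=gh_i$ with $h_i\in H$. Sub-families of the two families in condition (1) are still pairwise disjoint. The sets $J(x)$ for the sub-family can only shrink, so each has size at most $\varepsilon'\sqrt{kn}=\varepsilon\sqrt n$. This is why I rescale $\varepsilon$ by $\sqrt k$; it also shows that $N=\lceil N'/k\rceil$ works for every $n\geq N$.

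\textbf{Step 2 (transport into $H$).} The new witnesses are $t_i'=h_i\in H$, $C_i'=g^{-1}C_i\cap H$ and $D_i'=g^{-1}D_i\cap H$, so $C_i'\subseteq D_i'\subseteq H$. The key identity is
$$
h_i^{-1}(G\setminus g^{-1}D_i)=t_i^{-1}(G\setminus D_i),
$$
and likewise with $C_i$ in place of $D_i$. Intersecting with $H$ gives $h_i^{-1}(H\setminus D_i')=t_i^{-1}(G\setminus D_i)\cap H$, using that $h_i$ preserves $H$. For $x\in H$ this yields the two equivalences $x\in D_i'\iff gx\in D_i$ and $x\in h_i^{-1}(H\setminus C_i')\iff x\in t_i^{-1}(G\setminus C_i)$. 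Hence the new multiplicity set at $x$ sits inside a combination of the old sets $J(gx)$ and $J(x)$. In the worst case this doubles the bound, which another factor of $2$ in $\varepsilon'$ absorbs.

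\textbf{Main obstacle (condition (1)).} In the transported family, the $C$-part is $aC_i'\subseteq g^{-1}(gag^{-1})C_i$, while the complement part is $a t_i^{-1}(G\setminus D_i)\cap H$. So the first family has been left-translated by $g^{-1}$ and the second has not. The enlargement of $F$ to $F'$ by the conjugates $gag^{-1}$ was meant to handle this. However, the old disjointness compares $(gag^{-1})C_i$ with $b\,t_i^{-1}(G\setminus D_i)$ for $b\in F'$; it does not compare it with $g\,b\,t_i^{-1}(G\setminus D_i)$.

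My first attempt to fix this is to apply the same transport, $x\mapsto gx$, to the complement part as well. That means choosing $D_i'$ through the condition $gx\in t_i^{-1}(G\setminus D_i)$, and checking whether $x\mapsto gx$ carries both families of the $H$-statement into the corresponding $F'$-families in $G$. If it does, both families are translated by the same $g$, disjointness in $G$ transfers to disjointness in $H$, and condition (2) is controlled as in Step 2.

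If this fails, the fallback is to pigeonhole on right cosets $Ht_i$ instead. Those cosets are preserved by left multiplication by $F\subseteq H$, as in the proof of Lemma \ref{directed}. One would then restrict everything to a single right coset $Hs$ and identify it with $H$ via $h\mapsto hs$, which is $H$-equivariant for left multiplication. The remaining problem in that version is that the witness becomes $t_i s^{-1}$, which need not lie in $H$. So one again has to choose the coset data so that the $t_i$ normalize the relevant cosets.
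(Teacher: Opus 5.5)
\textbf{There is a genuine gap: condition (1) is never established.} Your pigeonhole step is correct and matches the paper. So are your witnesses $t_i'=h_i$, $C_i'=g^{-1}C_i\cap H$, $D_i'=g^{-1}D_i\cap H$, and your multiplicity bound through $J(gx)$ and $J(x)$ with $\varepsilon'=\varepsilon/(2\sqrt k)$. You locate the obstacle precisely but do not resolve it. The proposed redefinition of $D_i'$ via $gx\in t_i^{-1}(G\setminus D_i)$ is never checked, and it is not needed; changing $D_i'$ would also undo the clean equivalences your Step 2 relies on. The right-coset fallback stalls on the problem you point out yourself. As written, disjointness in $H$ is not proved.

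The missing idea is to put the translates $g_jF$, and not only the conjugates, into the finite set. Take $F'=F\cup\bigcup_j\bigl(g_jFg_j^{-1}\cup g_jF\bigr)$, which is still finite. Keep your definitions and apply the bijection $x\mapsto gx$ to \emph{both} families. Your identity $h_j^{-1}(H\setminus D_j')=t_j^{-1}(G\setminus D_j)\cap H$ gives, for $a,b\in F$,
$$
g(aC_i')=(gag^{-1})(C_i\cap gH)\subseteq (gag^{-1})C_i,\qquad g\bigl(b\,h_j^{-1}(H\setminus D_j')\bigr)\subseteq (gb)\,t_j^{-1}(G\setminus D_j).
$$
Both right-hand sides are members of the witness family for $F'$ in $G$. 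The maps $a\mapsto gag^{-1}$ and $b\mapsto gb$ are injective, so distinct index pairs in $H$ land on distinct members in $G$. Hence all the disjointness relations pull back along $x\mapsto gx$: $C$-type sets among themselves, complement-type sets among themselves, and the cross terms.

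Your remark that the old disjointness does not compare $(gag^{-1})C_i$ with $g\,b\,t_i^{-1}(G\setminus D_i)$ is exactly right. It simply becomes available once $gb\in F'$. This is the paper's proof, which uses $\widetilde F=F\cup\bigcup_{s\in S}(sFs^{-1}\cup sF)$ and $\delta=\varepsilon/(2\sqrt d)$.
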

\begin{proof}
    Let $d=[G:H]$ and choose a set $S$ of representatives for the left cosets of $H$, so that
$$
  G=\bigsqcup_{s\in S}sH,
  \qquad |S|=d.
$$
Fix a finite subset $F\subseteq H$ and $\varepsilon>0$, and put
$$
  \widetilde F
  =F\cup\bigcup_{s\in S}\bigl(sFs^{-1}\cup sF\bigr),
  \qquad
  \delta=\frac{\varepsilon}{2\sqrt d}.
$$
By property ${\rm {PHP}}$ for $G$, there is an $N\geq1$ such that the property is witnessed for $(\widetilde F,\delta)$ and for every integer $n\geq N$.

Let $m\geq\lceil N/d\rceil$ be arbitrary and set $n=dm$. For $1\leq i\leq n$, fix the witnesses $t_1,\ldots,t_n\in G$ and $C_i\subseteq D_i\subseteq G$ for $(\widetilde F,\delta)$. Since there are $d$ cosets, at least $m$ of the elements $t_i$ lie in the same coset $sH$ for some $s\in S$. Retain exactly $m$ such indices, relabel them as $1,\ldots,m$, and set
$$
  h_i=s^{-1}t_i\in H, \qquad A_i=s^{-1}C_i\cap H, \qquad B_i=s^{-1}D_i\cap H.
$$
We now verify that $h_i,A_i,B_i$ are ${\rm {PHP}}$ witnesses for $(F,\varepsilon)$ with $m$ indices. First, it is obvious that $A_i\subseteq B_i$, $sA_i=C_i\cap sH$, and $s(H\setminus B_i)=sH\setminus D_i$. Furthermore, for $a\in F$, we have
$$
  s(aA_i)=(sas^{-1})(C_i\cap sH)\subseteq(sas^{-1})C_i
$$
and 
$$
s\bigl(ah_i^{-1}(H\setminus B_i)\bigr)=(sa)t_i^{-1}(sH\setminus D_i)\subseteq (sa)t_i^{-1}(G\setminus D_i).
$$
Since $sFs^{-1},sF\subseteq \widetilde F$ and left multiplication by $s$ is a bijection, the sets $aA_i$ are pairwise disjoint over distinct pairs $(a,i)$, and the sets $b h_i^{-1}(H\setminus B_i)$ are also pairwise disjoint over distinct pairs $(b,i)$. Finally, for every $a,b\in F$ and $1\leq i,j\leq m$,
$$
  s\bigl(aA_i\cap b h_j^{-1}(H\setminus B_j)\bigr)\subseteq (sas^{-1})C_i\cap(sb)t_j^{-1}(G\setminus D_j)=\emptyset,
$$
again, because $sas^{-1},sb\in\widetilde F$. This proves all the required disjointness relations.

For the multiplicity estimate, let $x\in H$. By the definition of $B_i$ and noting that $h_i^{-1}(H\setminus A_i)=t_i^{-1}(sH\setminus C_i)$, we have
\begin{align*}
  &\left|\{1\leq i\leq m:x\in B_i\}\cup\{1\leq i\leq m:x\in h_i^{-1}(H\setminus A_i)\}\right|    \\
  &\quad\leq |\{1\leq i\leq m:sx\in D_i\}|+|\{1\leq i\leq m:x\in t_i^{-1}(sH\setminus C_i)\}|   \\
  &\quad\leq 2\delta\sqrt n
    =2\delta\sqrt{dm}=\varepsilon\sqrt m.
\end{align*}
Here the two terms are bounded using the original multiplicity estimate at $sx$ and at $x$, respectively. Since the construction works for every $m\geq\lceil N/d\rceil$, this proves that $H$ has property ${\rm {PHP}}$.
\end{proof}

\begin{rem}
    One can easily modify the proof above to conclude that finite-index inclusions $H\leq G$ have property ${\rm PHP}$ in the sense of \cite{FKOCP26}. Thus, appealing to \cite{FKOCP26}, we see that under the property ${\rm PHP}$, finite-index inclusions of groups give rise to selfless inclusions of \Cs-algebras. The required modifications involve assuming that $F\subseteq G$ and changing the sets $A_i, B_i$ to $s^{-1} C_i$ and $s^{-1}D_i$.
\end{rem}

\section{Proof of the main results}

\begin{lem}\label{exp}
Let $R$ be a domain, and let $G$ be a group with property ${\rm {PHP}}$. Then, for every non-zero $\alpha\in RG$, there is an element $\theta\in\alpha RG$ such that
$$
 |{\rm supp}(\theta\beta)|>|{\rm supp}(\beta)|
 \qquad\text{for every }0\neq\beta\in RG.
$$
\end{lem}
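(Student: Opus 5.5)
The plan is to build $\theta$ as a sum of right translates of $\alpha$, $\theta=\sum_{i=1}^n \alpha\, g_i$, where the $g_i$ are the elements $t_i^{\pm1}$ coming from property ${\rm PHP}$. This $\theta$ lies in $\alpha RG$ automatically. Since ${\rm PHP}$ is phrased in terms of left translates $aC_i$, while the product $\theta\beta$ has support inside $\operatorname{supp}(\alpha)\,g_i\,\operatorname{supp}(\beta)$, I would apply ${\rm PHP}$ with a finite set $F$ built from $\operatorname{supp}(\alpha)$: for instance $F=\operatorname{supp}(\alpha)\cup\operatorname{supp}(\alpha)^{-1}\cup\{e\}$, possibly replaced by $F^{-1}F$ to leave room. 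The tolerance will be $\varepsilon=1/2$ or smaller, and $n$ is fixed large, depending only on $\alpha$.

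The heart of the argument is a ping-pong type isolation statement. Fix $0\neq\beta$ with support $B$, $|B|=k$. For an index $i$ and $b\in B$, call $(i,b)$ \emph{good} if $b$ lies outside the exceptional set, i.e.\ $i\notin J(b)$ (or the analogous set at the appropriate translate of $b$). The disjointness condition (1) in Definition \ref{thedef} should then force the following. For a good pair $(i,b)$ and a suitable ``extremal'' $a\in\operatorname{supp}(\alpha)$, the element $x=a g_i b$ lies in a region $aC_i$ (or $a t_i^{-1}(G\setminus D_i)$) that no other term $a' g_j b'$ with $(j,b')\neq(i,b)$ can reach. Choosing $a$ extremal, in the sense of a leading term as in the usual Powers/ping-pong proofs, makes the coefficient of $\theta\beta$ at $x$ equal to a single product $\alpha_a\beta_b$. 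This is nonzero because $R$ is a domain.

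Granting this, I would count as follows. Each $b\in B$ is bad for at most $\varepsilon\sqrt n$ indices by condition (2), so there are at least $k(n-\varepsilon\sqrt n)$ good pairs. Each resulting point $x$ has only boundedly many preimages: the $a$'s are confined to $F$, and the relevant indices lie in a set of size at most $\varepsilon\sqrt n$, again by (2). This gives a bound of the form
\[
|\operatorname{supp}(\theta\beta)|\ \ge\ \frac{k\,(n-\varepsilon\sqrt n)}{|F|\,\varepsilon\sqrt n+1},
\]
which exceeds $k$ once $\sqrt n\gg |F|/\varepsilon$. The point is that this choice of $n$ is independent of $\beta$, which is exactly what the lemma demands.

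The main obstacle is the isolation step: showing rigorously that for good pairs the chosen $a g_i b$ is hit by no competing term, so that no cancellation occurs. The difficulty is that ${\rm PHP}$ controls left translates $aC_i$ while $\beta$ acts on the right. I would first try to resolve this by evaluating everything at the points $x$ and using condition (1) exactly as in Ozawa's proof that the map $\sum_i \lambda(t_i)$ ``spreads'' vectors. If the left/right mismatch persists, I would apply Definition \ref{thedef} to the inverted configuration, passing to the opposite group via $g\mapsto g^{-1}$ and the anti-isomorphism $RG\cong R G^{\rm op}$. I would then choose between $g_i=t_i$ and $g_i=t_i^{-1}$ accordingly.
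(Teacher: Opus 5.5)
\textbf{There is a genuine gap.} It sits at the step you yourself flag as the main obstacle, and the mechanism you propose for that step is not the right one.

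Your setup matches the paper's. You take $\theta=\alpha\sum_i g_i$ with $g_i\in\{t_i,t_i^{-1}\}$, call $(i,b)$ good when $i\notin J(b)$, and fix $n$ from ${\rm PHP}$ independently of $\beta$. The paper takes $F={\rm supp}(\alpha)$, $\varepsilon=1/8$ and $\theta=\alpha\sum_{i=1}^n(t_i+t_i^{-1})$.

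However, there is no left/right mismatch to overcome. If $i\notin J(b)$, then $b\notin t_i^{-1}(G\setminus C_i)$, i.e.\ $t_ib\in C_i$. Also $b\notin D_i$, so $t_i^{-1}b\in t_i^{-1}(G\setminus D_i)$. Hence $at_ib\in aC_i$ and $at_i^{-1}b\in at_i^{-1}(G\setminus D_i)$ for \emph{every} $a\in{\rm supp}(\alpha)$. Condition (1) makes all these regions pairwise disjoint, so the map $(a,i,\sigma,b)\mapsto at_i^{\sigma}b$ is injective on good labels. No ``extremal'' $a$ is needed, and there is no ordering on $G$ that would supply a leading term anyway. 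Nor do you need to pass to $G^{\rm op}$.

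The real problem is your claim that no other term $a'g_jb'$ can reach $x=ag_ib$. This is false as stated. Bad labels, those with $j\in J(b')$, are not controlled by ${\rm PHP}$ at all and can land inside $aC_i$. The coefficient at $x$ is then a sum of several products that may cancel. Your displayed bound, which divides by a multiplicity $|F|\varepsilon\sqrt n+1$, does not address this. Bounding the number of preimages of a point says nothing about whether its coefficient vanishes, and the claimed multiplicity bound is not justified by (2) in any case.

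The missing idea is a contamination count. Good values are pairwise distinct, so each bad label coincides with at most one good value. Hence at least (number of good labels) minus (number of bad labels) good values are hit by exactly one label, and at each of these the coefficient is $r_as_b\neq0$ because $R$ is a domain. With both signs this gives $|{\rm supp}(\theta\beta)|\geq 2|F|(n-2\varepsilon\sqrt n)\,|{\rm supp}(\beta)|$, which exceeds $|{\rm supp}(\beta)|$ when $\varepsilon=1/8$. With your single sign per index the same count gives $|F|(n-2\varepsilon\sqrt n)\,|{\rm supp}(\beta)|$, which also suffices once $n$ is taken large.
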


\begin{proof}
Write
$$
    \alpha=\sum_{a\in F}r_a a,\qquad 0\neq r_a\in R,
$$
where $F={\rm supp}(\alpha)$ is finite and non-empty. Apply the definition of ${\rm {PHP}}$ to $F$ with $\varepsilon=1/8$ and let $n,t_i,C_i,D_i$ be the resulting witnesses. We use them to set
$$
 z=\sum_{i=1}^n(t_i+t_i^{-1}),
 \qquad \theta=\alpha z\in\alpha RG.
$$
Now, fix $0\neq\beta\in RG$ and write $\beta=\sum_{h\in H}s_hh$, where $H={\rm supp}(\beta)$.  We regard the expansion
\begin{equation}\label{expansion}
    \theta\beta=\sum_{a\in F}\sum_{i=1}^n\sum_{h\in H} r_as_h\bigl(at_ih+at_i^{-1}h\bigr)
\end{equation}
as a sum indexed by quadruples $(a,i,\sigma,h)$, where $\sigma\in\{+,-\}$. For each $h\in H$, declare both signs attached to $i$ to be good when $i\notin J(h)$, and declare both signs bad otherwise. Then, there are at least $2|F|(n-\varepsilon\sqrt n)|H|$ good term indices and at most $2|F|\varepsilon\sqrt n|H|$ bad term indices.

Note that the group elements represented by distinct good term indices are
distinct. Indeed, if $i\notin J(h)$, then
$$
 t_ih\in C_i,\qquad t_i^{-1}h\in t_i^{-1}( G\setminus D_i).
$$
Thus, the plus and minus terms lie respectively in $aC_i$ and $at_i^{-1}( G\setminus D_i)$, which are disjoint. Hence, if a group element shares two good $(a,i,\sigma,h)$-labels, then these must be equal. 

Now, observe that a group element with a good $(a,i,\sigma,h)$-label could also be the result of a bad $(a,i,\sigma,h)$-tuple. If this happens, we will call it contaminated. Because the map $(a,i,\sigma,h)\mapsto at_i^\sigma h$ is injective on good labels, each bad label can only contaminate a single good value. Hence, the number of uncontaminated good values is at least
$$
 2|F|(n-2\varepsilon\sqrt n)|H|.
$$
Each such value occurs exactly once in \eqref{expansion}; its coefficient is therefore $r_as_h$, which is non-zero because $R$ is a domain. We conclude that every uncontaminated good value lies in ${\rm supp}(\theta\beta)$.

Finally, using that $n\geq1$ and $\varepsilon=1/8$, we see that
$$
 |{\rm supp}(\theta\beta)|\geq 2|F|(n-2\varepsilon\sqrt n)|H|=2|F|\left(n-\tfrac14\sqrt n\right)|H| \geq\tfrac32|F|n|H|>|H|,
$$
proving the result.
\end{proof}

The next lemma is a direct consequence of a result of Alexander-Nishinaka \cite{AlNi17}.

\begin{lem}\label{AN}
Let $R$ be a ring with identity, and let $G$ be a group that contains a free subgroup with free basis given by
$$
 \mathcal X=\{x_{j,s}:j\in J,\ s\in\{1,2,3\}\}.
$$
If $J_0\subseteq J$ is finite and non-empty and $0\neq v_j\in R G$ for $j\in J_0$, then
$$
 \Big|
 {\rm supp}\Big(\sum_{j\in J_0}
 (x_{j,1}+x_{j,2}+x_{j,3})v_j\Big)
 \Big|
 >\sum_{j\in J_0}|{\rm supp}(v_j)|.
$$
\end{lem}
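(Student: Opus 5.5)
The plan is to derive Lemma \ref{AN} from \cite[Lemma 3.5]{AlNi17}. The work consists of matching our hypotheses to the hypotheses there and translating their conclusion into the stated support inequality. Recall the setting of \cite{AlNi17}: one has elements $x_{j,s}$ freely generating a free subgroup. For each $j\in J_0$ the element $x_{j,1}+x_{j,2}+x_{j,3}$ is applied to $v_j$, and the claim is that the supports of the products $x_{j,s}v_j$ overlap so little that the total support strictly exceeds $\sum_j|{\rm supp}(v_j)|$. The first step is to verify that \cite[Lemma 3.5]{AlNi17} needs only a free basis of this shape inside $G$ and an arbitrary ring $R$ with identity. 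If their statement is phrased via a normal form or via a subgroup generated by the $x_{j,s}$, I will note that freeness of $\mathcal X$ is exactly what is required.

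If a self-contained justification is needed, I would argue combinatorially. Let $L$ be the free subgroup generated by $\mathcal X$, and decompose each $v_j$ along the right cosets $Lg$. Since left multiplication by the $x_{j,s}$ preserves each coset $Lg$, it suffices to prove the inequality coset by coset, with at least one coset where it is strict. This reduces the problem to $RL\cdot g$, and hence to $G=L$ free.

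In the free group, write each element of ${\rm supp}(v_j)$ as a reduced word. For each $j$, consider the elements of ${\rm supp}(v_j)$ of maximal length among those whose reduced form does not begin with $x_{j,s}^{-1}$ for the relevant $s$. For such $w$, the product $x_{j,s}w$ is reduced, so its first letter records $(j,s)$, and no cancellation can occur between different $(j,s)$. The core counting is the following. For each $j$ and each $w\in{\rm supp}(v_j)$, at most one of the three letters $x_{j,1},x_{j,2},x_{j,3}$ can cancel against the first letter of $w$. So at least two of the three products $x_{j,s}w$ are reduced words starting with $x_{j,s}$. Such a word can arise from the index $j$ and the word $w$ only, since $j,s,w$ are recovered from it. Its coefficient is therefore the nonzero coefficient of $w$ in $v_j$ (the coefficients of the $x_{j,s}$ are $1$). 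This already gives about $2\sum_j|{\rm supp}(v_j)|$ candidate support elements.

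The delicate point, which I expect to be the main obstacle, is collisions. A reduced word $x_{j,s}w$ could coincide with a cancelled product $x_{j',s'}w'$ with $w'$ beginning with $x_{j',s'}^{-1}$, and then coefficients might sum to zero. I would handle this the way Alexander–Nishinaka do, by a length/ordering argument. Take $w$ of maximal length in $\bigcup_j{\rm supp}(v_j)$. The reduced products $x_{j,s}w$ then have length $|w|+1$, which cannot be attained by any cancelled product, because cancelled products have length at most $|w'|-1\le |w|-1$. More generally, a cancelled product $x_{j',s'}w'$ is shorter than $w'$, while a reduced product $x_{j,s}w$ is longer than $w$. A product of one kind equals a product of the other only if $|w'|-1=|w|+1$, that is, if $|w'|=|w|+2$. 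This means the words involved in a given collision differ in length by exactly two. Each cancelled product can collide with at most one reduced product, and each $w$ contributes at least two reduced products but at most one cancelled product. So a careful injection charging each cancelled term to one reduced term still leaves at least one uncontaminated reduced term per $w$. Together with the strict surplus at maximal length, this yields the strict inequality. In practice I would simply cite \cite[Lemma 3.5]{AlNi17}, which carries out exactly this bookkeeping.
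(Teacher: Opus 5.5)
Your primary plan is exactly the paper's proof. You apply \cite[Lemma 3.5]{AlNi17} to the pairs in $\bigcup_{j\in J_0}\{x_{j,1},x_{j,2},x_{j,3}\}\times{\rm supp}(v_j)$. You then note that an isolated product $x_{j,s}w$ carries the non-zero coefficient of $w$ in $v_j$, since $x_{j,s}$ has coefficient $1$. That lemma is a purely group-theoretic statement about isolated products in a mutually reduced set, and part of a free basis qualifies. So there is nothing to check about $R$: the ring enters only through the coefficient remark.

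Your self-contained backup, however, does not establish the \emph{strict} inequality as written. Put $m=\sum_j|{\rm supp}(v_j)|$ and split the $3m$ labels $(j,s,w)$ into reduced ones ($P$) and cancelled ones ($Q$), so $|Q|\le m$. Your strictness step rests on a per-$w$ accounting: one guaranteed clean term for each $w$, plus a surplus at the top length. That per-$w$ claim is false. For $v_j=1+x_{j,2}^{-1}x_{j,1}+x_{j,1}^{-1}x_{j,2}+x_{j,1}^{-1}x_{j,3}$, all three products $x_{j,s}\cdot 1$ coincide with cancelled products. Only the global count survives: each cancelled label spoils at most one reduced label, so at least $|P|-|Q|=3m-2|Q|\ge m$ reduced labels are isolated. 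This gives only $\ge m$. The maximal-length observation does not improve it, because knowing which reduced labels are unspoiled does not reduce the number that are spoiled.

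The missing idea is to look at a word of \emph{minimal} length. If $|Q|<m$, you already get $3m-2|Q|\ge m+2$. If $|Q|=m$, every $w$ in every support begins with some $x_{j,s}^{-1}$. Choose such a $w$ of minimal length $\ell$ in $\bigcup_j{\rm supp}(v_j)$. Its cancelled product has length $\ell-1$, while every reduced product has length at least $\ell+1$. So this cancelled label spoils nothing, and at least $|P|-(|Q|-1)=m+1$ reduced labels are isolated. With this repair, the backup becomes a complete elementary proof that avoids \cite{AlNi17}; your right-coset reduction to the free subgroup is fine as it stands.
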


\begin{proof}
Enumerate $J_0=\{j_1,\ldots,j_n\}$ and set
$$
 S_i={\rm supp}(v_{j_i}),\qquad
 X_i=\{x_{j_i,1},x_{j_i,2},x_{j_i,3}\},\qquad
 m=\sum_{i=1}^n|S_i|.
$$
The elements of $\bigcup_iX_i$ are mutually reduced in the sense of Alexander-Nishinaka because they belong to a free basis. Applying their isolated-product estimate \cite[Lemma 3.5]{AlNi17} to $V=\bigcup_i(X_i\times S_i)$ shows that more than $m$ pairs $(x,h)\in V$ have the property that $xh\neq x'h'$ for every other $(x',h')\in V$.

Each such pair contributes a distinct group element to the sum $\sum_{j\in J_0}
 (x_{j,1}+x_{j,2}+x_{j,3})v_j$, and its coefficient is exactly the non-zero coefficient of $h$ in $v_{j_i}$ when $x\in X_i$. Hence, none of these terms cancels, and the support of the sum has cardinality greater than $m$, as required.
\end{proof}

The following lemma is a classical criterion due to Formanek. It can be found in \cite[Theorem 1]{Fo73}. We include its proof for the reader's sake.

\begin{lem}[Formanek's criterion]\label{formanek}
Let $A$ be a unital ring. Suppose there is a proper right ideal $\rho\subsetneq A$ such that $\rho+I=A$ for every non-zero two-sided ideal $I\triangleleft A$. Then $A$ is right primitive.
\end{lem}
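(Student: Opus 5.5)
The plan is to use Zorn's lemma to enlarge $\rho$ to a maximal right ideal, and then check that the resulting simple module is faithful.

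First, consider the family of proper right ideals of $A$ that contain $\rho$. It is non-empty, since $\rho$ itself belongs to it. Because $A$ is unital, a right ideal is proper exactly when it does not contain $1$. Hence the union of a chain of proper right ideals containing $\rho$ is again proper. Zorn's lemma then gives a maximal right ideal $\mathfrak m\supseteq\rho$, and $M=A/\mathfrak m$ is a simple (irreducible) right $A$-module. It is non-zero because $\mathfrak m\neq A$.

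Second, we show that $M$ is faithful. Its annihilator
$$I={\rm Ann}(M)=\{x\in A: Ax\subseteq \mathfrak m\}$$
is a two-sided ideal of $A$. Suppose, for contradiction, that $I\neq 0$. The hypothesis then gives $\rho+I=A$. Since $\rho\subseteq\mathfrak m$, and since $I\subseteq\mathfrak m$ (take $1\cdot x$ in the definition of $I$), we get $A=\rho+I\subseteq\mathfrak m$. This contradicts properness, so $I=0$ and $M$ is a faithful irreducible right $A$-module. Therefore $A$ is right primitive.

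The only step that needs any care is the Zorn's lemma argument, namely that properness is preserved under unions of chains. This is exactly where unitality is used, through the criterion $1\notin\mathfrak m$. Everything else is a direct unwinding of the definitions. Note that the hypothesis is applied only to the single ideal ${\rm Ann}(M)$.
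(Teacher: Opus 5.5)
Your proof is correct and follows essentially the same argument as the paper: enlarge $\rho$ to a maximal right ideal, note that the annihilator $\{x\in A : Ax\subseteq M\}$ of the simple module $A/M$ is a two-sided ideal contained in $M$, and use $\rho+I=A$ to force it to be zero. The only difference is that you spell out the Zorn's lemma step, which the paper takes for granted.
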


\begin{proof}
Choose a maximal right ideal $M$ containing $\rho$.  If a non-zero two-sided ideal $I$ were contained in $M$, then $A=\rho+I\subseteq M$, which is a contradiction. Hence, $M$ contains no non-zero
two-sided ideal.

The right module $A/M$ is simple. Its annihilator is
$$
 \{a\in A:Aa\subseteq M\},
$$
which is the largest two-sided ideal contained in $M$. This annihilator is zero, so $A/M$ is faithful. \end{proof}

\begin{thm}\label{domain}
Let $G$ be a group with property ${\rm {PHP}}$. Suppose that $G$ contains a non-abelian free subgroup of rank $|G|$.  Then, for every domain $R$ with $|R|\leq|G|$, the group ring $R G$ is primitive.
\end{thm}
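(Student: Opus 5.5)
We will verify Formanek's criterion (Lemma \ref{formanek}) for $A=RG$, following the scheme of Alexander--Nishinaka. First, since $G$ is infinite (it contains a non-abelian free group) and $|R|\le |G|$, we have $|RG\setminus\{0\}| = |G|=:\kappa$. Enumerate the non-zero elements of $RG$ as $\{\alpha_j : j\in J\}$ with $|J|=\kappa$. Since $G$ contains a free subgroup of rank $\kappa$, and $\kappa\cdot 3=\kappa$, we may choose a free basis inside it indexed as $\mathcal X=\{x_{j,s}: j\in J,\ s\in\{1,2,3\}\}$. For each $j$, Lemma \ref{exp} gives $\theta_j\in\alpha_j RG$ such that $|{\rm supp}(\theta_j\beta)|>|{\rm supp}(\beta)|$ for every $\beta\ne0$. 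Define
$$
\varepsilon_j=(x_{j,1}+x_{j,2}+x_{j,3})\,\theta_j ,
$$
and let $\rho$ be the right ideal generated by the elements $1+\varepsilon_j$, $j\in J$.

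To get $\rho+I=RG$ for a non-zero two-sided ideal $I$, pick $0\ne\alpha_j\in I$. Then $\theta_j\in\alpha_jRG\subseteq I$, so $\varepsilon_j\in I$ because $I$ is a left ideal. Hence $1=(1+\varepsilon_j)-\varepsilon_j\in\rho+I$.

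The main point is to show that $\rho$ is proper, i.e.\ $1\notin\rho$. Suppose, for contradiction, that $1=\sum_{j\in J_0}(1+\varepsilon_j)v_j$ with $J_0\subseteq J$ finite and non-empty and all $v_j\neq0$. Rewrite this as
$$
\sum_{j\in J_0}(x_{j,1}+x_{j,2}+x_{j,3})\,\theta_jv_j \;=\; 1-\sum_{j\in J_0}v_j .
$$
Since $RG$ is a domain is not available in general, we must handle the possibility $\theta_jv_j=0$. Lemma \ref{exp} already excludes it: $|{\rm supp}(\theta_jv_j)|>|{\rm supp}(v_j)|\ge1$, so every $w_j:=\theta_jv_j$ is non-zero. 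Lemma \ref{AN} then bounds the support of the left-hand side below:
$$
\Big|{\rm supp}\Big(\sum_{j\in J_0}(x_{j,1}+x_{j,2}+x_{j,3})w_j\Big)\Big| \;>\;\sum_{j\in J_0}|{\rm supp}(w_j)| \;\ge\; \sum_{j\in J_0}\big(|{\rm supp}(v_j)|+1\big).
$$
On the other hand, the right-hand side has support of size at most $1+\sum_{j\in J_0}|{\rm supp}(v_j)|$. Because $J_0\ne\emptyset$, the lower bound is at least $\sum_{j\in J_0}|{\rm supp}(v_j)|+1$, and the inequality is strict. This is a contradiction, so $\rho$ is proper.

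Combining the two steps, Formanek's criterion yields that $RG$ is right primitive. Left primitivity follows symmetrically, for instance by applying the involution $g\mapsto g^{-1}$ (valid if $R$ is commutative), or by running the mirrored argument, since property ${\rm PHP}$ is symmetric enough. We expect the delicate steps to be the cardinality bookkeeping and the strictness of the support count; the "+1" gained from Lemma \ref{exp} is exactly what absorbs the constant term $1$.
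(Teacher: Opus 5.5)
Your proposal is correct and follows the paper's proof essentially step for step. You use the same elements $(x_{j,1}+x_{j,2}+x_{j,3})\theta_j$, the same right ideal $\rho$ fed into Formanek's criterion, and the same support count from Lemmas \ref{exp} and \ref{AN}. Your comparison with $|{\rm supp}(1-\sum_{j}v_j)|\leq 1+\sum_j|{\rm supp}(v_j)|$ is just a rephrasing of the paper's estimate $|{\rm supp}(W+U)|\geq|{\rm supp}(W)|-|{\rm supp}(U)|\geq 2$.

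The closing remark on left primitivity is not needed, since primitive means right primitive here. If you keep it, do not rely on an unspecified symmetry of ${\rm PHP}$. Instead, justify it by the anti-isomorphism $RG\to R^{\rm op}G$, $\sum_g r_g g\mapsto\sum_g r_g g^{-1}$, and apply the theorem to the domain $R^{\rm op}$.
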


\begin{proof}
For simplicity, set $A=R G$. Since $G$ is infinite, we have $|R|\leq|A|=|G|$. Choose, inside the given free subgroup, distinct free-basis elements $x_{\alpha,s}$, for $0\neq\alpha\in A$ and $s\in\{1,2,3\}$. Furthermore, for each $0\neq\alpha\in A$, appealing to Lemma \ref{exp}, we find an element $\theta_\alpha\in\alpha A$ such that $|{\rm supp}(\theta_\alpha\beta)|>|{\rm supp}(\beta)|$, for all $0\neq\beta\in A$. Define
$$
 e_\alpha=(x_{\alpha,1}+x_{\alpha,2}+x_{\alpha,3})
             \theta_\alpha.
$$
Because $\theta_\alpha\in\alpha A$, we have $e_\alpha\in A\alpha A$.  Let
$$
 \rho=\sum_{0\neq\alpha\in A}(1+e_\alpha)A
$$
be the right ideal generated by the $1+e_\alpha$'s. We claim that $\rho$ is proper.  Suppose, to the contrary, that $1\in\rho$. Then, for a finite non-empty set $J_0\subseteq A\setminus\{0\}$, there are non-zero $u_\alpha\in A$ such
that
$$
 1=\sum_{\alpha\in J_0}(1+e_\alpha)u_\alpha.
$$
Zero summands, if any, have been removed.  Put
$$
 \qquad W=\sum_{\alpha\in J_0}e_\alpha u_\alpha, \qquad U=\sum_{\alpha\in J_0}u_\alpha,
$$
and observe that $W+U=\sum_{\alpha\in J_0}(1+e_\alpha)u_\alpha=1$. Then, $\theta_\alpha u_\alpha$ is non-zero and, by Lemma \ref{AN}, we have that 
$$
 |{\rm supp}(W)|>\sum_{\alpha\in J_0}|{\rm supp}(\theta_\alpha u_\alpha)|.
$$
Since ${\rm supp}(W+U)$ contains
${\rm supp}(W)\setminus{\rm supp}(U)$ and
$|{\rm supp}(U)|\leq\sum_\alpha|{\rm supp}(u_\alpha)|$, we obtain
\begin{align*}
 |{\rm supp}(W+U)|
 &\geq |{\rm supp}(W)|-|{\rm supp}(U)|\\
 &>\sum_{\alpha\in J_0}
   \bigl(|{\rm supp}(\theta_\alpha u_\alpha)|-|{\rm supp}(u_\alpha)|\bigr)\\
 &\overset{\ref{exp}}{\geq} |J_0|\geq1.
\end{align*}
Thus $W+U$ has at least two points in its support. This shows that $\rho$ is proper.

Now let $I\triangleleft A$ be a non-zero two-sided ideal and choose $0\neq\alpha\in I$.  Since $e_\alpha\in A\alpha A\subseteq I$ and $1+e_\alpha\in\rho$, we have
$$
 1=(1+e_\alpha)-e_\alpha\in\rho+I.
$$
Formanek's criterion applies, and it implies that $A$ is (right) primitive.
\end{proof}

Recall that the FC-center of a group $G$ is given by
$$
 {\rm FC}(G)=\{g\in G:|\{hgh^{-1}:h\in G\}|<\infty\}.
$$
By \cite[Theorem 14]{Oz25}, a group $G$ that enjoys property ${\rm {PHP}}$ gives rise to a selfless $\rm C^*$-algebra. Since selfless $\rm C^*$-algebras are simple \cite[Theorem 3.1]{Ro25}, we necessarily have that ${\rm FC}(G)=\{1\}$, as the FC-center is an amenable normal subgroup of $G$ \cite[Theorem 1.3]{BKKO17}.

We are now ready to prove the remaining part of our main theorem. In order to do so, we recall the following theorem of Passman: if $kG$ is primitive and $K/k$ is a field extension, then $KG$ is primitive provided either $K/k$ is algebraic or ${\rm FC}(G)=\{1\}$ \cite[Theorem 2]{Pa73}. 

\begin{proof}[Proof of Theorem \ref{mainthm}]
The case of $R$ being a countable domain was settled in Theorem \ref{domain}. Let $K$ be any field and let $k$ be its prime subfield. Thus $k$ is either finite or isomorphic to $\mathbb Q$. In any case, $k$ is a countable field and we have that $kG$ is primitive. Because of the discussion above, we have that ${\rm FC}(G)=\{1\}$ and that $KG$ is primitive, due to Passman's result \cite[Theorem 2]{Pa73}. 
\end{proof}

\begin{proof}[Proof of Corollary \ref{maincor}]
Vigdorovich proves that $ G$ is a directed union of finitely generated linear subgroups with trivial amenable radical \cite[Lemma 5.3]{Vi26}. For each such subgroup, the proof of \cite[Theorem 1.2]{Vi26} constructs a faithful action on a flag space and deduces the property ${\rm {PHP}}$ as a consequence.  Hence, Lemma \ref{directed} gives ${\rm {PHP}}$ for $G$.

In order to appeal to Theorem \ref{mainthm}, it only remains to verify the free-subgroup hypothesis. Indeed, if $G$ contained no copy of $\mathbb F_2$, then by the Tits alternative every finitely generated subgroup of $G$ would be virtually solvable, hence amenable \cite{Ti72}.  A directed union of amenable discrete groups is amenable, so $ G$ itself would be amenable. Hence $G$ contains $\mathbb F_2$, and Theorem \ref{mainthm} applies.
\end{proof}

\section*{AI statement}

GPT-5.6 Sol was used to help with English language editing, proofreading, and grammatical corrections. All of this article has been carefully written by the author. All of the mathematical ideas behind this work were conceived by the author.

\section*{Acknowledgments}

The author gratefully acknowledges support from the Simons Foundation Dissertation Fellowship SFI-MPS-SDF-00015100. He also thanks Ben Hayes and Aaratrick Basu for all the interesting discussions surrounding the present topic.

\printbibliography
\newpage

\bigskip
\bigskip
ADDRESS

\smallskip
\smallskip
Felipe I. Flores

Department of Mathematics, University of Virginia,

114 Kerchof Hall. 141 Cabell Dr,

Charlottesville, Virginia, United States

E-mail: hmy3tf@virginia.edu
\end{document}